\documentclass[11pt]{article}
\usepackage[margin=1.1in]{geometry}
\usepackage{amsmath,amssymb,amsthm}
\usepackage[colorlinks=true,linkcolor=blue,citecolor=blue]{hyperref}

\newtheorem{theorem}{Theorem}
\newtheorem{corollary}[theorem]{Corollary}
\newtheorem{proposition}[theorem]{Proposition}
\newtheorem{lemma}[theorem]{Lemma}
\newtheorem{remark}[theorem]{Remark}

\newcommand{\N}{\mathbb{N}}
\newcommand{\R}{\mathbb{R}}

\newcommand{\Z}{\mathbb{Z}}
\newcommand{\dist}{\operatorname{dist}}

\title{A Borsuk-Ulam theorem for adaptive continuous measurements}
\author{Mario Ullrich\\[1mm] \small Johannes Kepler University Linz\\
\small\texttt{mario.ullrich@jku.at}}
\date{October 24, 2026}

\begin{document}
\maketitle

\begin{abstract}
We prove a Borsuk-Ulam theorem for adaptive measurements: if~$2^n\le m$, then any~$n$ adaptively chosen continuous functionals take identical values at two antipodal points of any sphere in~$\R^m$. 
Consequently, at least $\lceil\log_2(m+1)\rceil$ adaptive
continuous measurements are needed to learn the vectors of a ball in~$\R^m$ with an
error smaller than its radius, which matches the recently obtained upper bound up to
one measurement. The bound extends to Bernstein widths in Banach spaces. The main
idea of the proof was found by Claude~Fable~5, an AI model developed by Anthropic,
and we document the development in a last section.
\end{abstract}

\medskip
\noindent\textbf{Keywords:} information-based complexity, adaption, continuous
measurements, Borsuk-Ulam theorem, genus, Bernstein widths.\\
\textbf{MSC 2020:} 65Y20, 41A46, 55M20, 55M30.

\section{The problem and the result}\label{sec:intro}

Fix a norm~$\|\cdot\|$ on~$\R^m$ and a set $F\subseteq\R^m$ of vectors we want to
approximate with a small error that is measured in the given norm. For~$R>0$, we write
\[
S_R^m\;=\;\{x\in\R^m:\ \|x\|=R\}
\]
for the sphere of radius~$R$ in this norm. An
\emph{adaptive continuous measurement map of length~$n$} on~$F$ is built from
\begin{itemize}
\item a continuous functional $\lambda_1\in C(F)$, where~$C(F)$ denotes the
real-valued functions on~$F$ that are continuous with respect to the given norm, and
\item for $2\le k\le n$, an \emph{arbitrary} (no regularity assumed) assignment
$(y_1,\dots,y_{k-1}) \in \R^{k-1} \;\mapsto\; \lambda_k^{(y_1,\dots,y_{k-1})}\in C(F)$:
\end{itemize}
it is the map
\[
N\colon F\to\R^n,\qquad N(x)=(y_1,\dots,y_n),
\]
where $y_1=\lambda_1(x)$ and $y_k=\lambda_k^{(y_1,\dots,y_{k-1})}(x)$ for $2\le k\le
n$. 
We refer to the $\lambda_k$ as \emph{measurements}. 
An \emph{algorithm using~$n$ adaptive continuous measurements} is a map
$A=\Phi\circ N$ with an adaptive continuous measurement map~$N$ of length~$n$ and an
arbitrary reconstruction map $\Phi\colon\R^n\to\R^m$. Its \emph{worst-case error} is
\[
e(A)\;=\;\sup_{x\in F}\,\bigl\|x-\Phi(N(x))\bigr\| .
\]
This is the model of \cite{KNU}; see \cite[Sec.~10]{KU} for a survey and \cite{KNPU}
for noisy measurements. The algorithm of \cite{KNU} learns every~$x\in\R^m$ with
arbitrary precision from $\lceil\log_2(m)\rceil+1$ adaptive continuous measurements.
We show that this is optimal up to one measurement.

The basic principle for lower bounds is \emph{indistinguishability}: if~$N(x)=N(x')$, the algorithm returns the same point for both inputs, so
$2\,e(A)\ge\|x-x'\|$. For a nonadaptive measurement map, which is a continuous map
$N\colon F\to\R^n$, the Borsuk-Ulam theorem provides indistinguishable pairs: every
continuous map~$S_R^m\to\R^n$ with~$n\le m-1$ takes identical
values at some pair of antipodal points \cite{Bor,Mat}. An adaptive measurement map
is in general \emph{discontinuous}, since $\lambda_k^{(\cdot)}$ may depend on the
earlier values in a completely irregular way, and the classical theorem says nothing
about it. The following is the statement that survives. We say that~$N$
\emph{distinguishes antipodes} on $S_R^m\subseteq F$ if
$N(x)\neq N(-x)$ for all~$x\in S_R^m$.

\begin{theorem}[Borsuk-Ulam for adaptive measurements]\label{thm:main}
Let~$\|\cdot\|$ be any norm on~$\R^m$, let~$R>0$, and let~$N$ be an adaptive
continuous measurement map of length~$n$ on a set containing~$S_R^m$. If~$2^n\le m$,
then there is~$x\in S_R^m$ with
\[
N(x)\;=\;N(-x) .
\]
Equivalently, if~$N$ distinguishes antipodes on~$S_R^m$, then~$2^n\ge m+1$.
\end{theorem}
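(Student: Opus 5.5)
The plan is to replace the dimension count in the classical Borsuk-Ulam theorem by the Krasnoselskii genus $\gamma$ of closed symmetric sets ($A=-A$, $0\notin A$), and to prove by induction on $n$ the following statement:
\begin{quote}
\emph{If $A\subseteq S_R^m$ is closed and symmetric and $N$ is an adaptive continuous measurement map of length $n$ that distinguishes antipodes on $A$, then $\gamma(A)\le 2^n-1$.}
\end{quote}
Since $\gamma(S_R^m)=m$ for any norm (the sphere is odd-homeomorphic to the Euclidean sphere, and this is the Borsuk-Ulam theorem in genus form), this gives $m\le 2^n-1$, which is the theorem.

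I will use these standard properties of the genus for compact symmetric sets:
\begin{itemize}
\item monotonicity;
\item subadditivity, $\gamma(A\cup B)\le\gamma(A)+\gamma(B)$;
\item $\gamma(A)\le k$ whenever there is an odd continuous map $A\to\R^k\setminus\{0\}$;
\item continuity: every compact symmetric $K$ has a closed symmetric neighbourhood $U$ with $\gamma(U)=\gamma(K)$;
\item if $A=B\cup C$ with $B,C$ closed, symmetric and disjoint, then $\gamma(A)=\max(\gamma(B),\gamma(C))$.
\end{itemize}
The last property holds because odd maps into $\R^k\setminus\{0\}$ on $B$ and on $C$ glue to one on $A$.

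For the base case $n=0$, the map $N$ is constant, so it distinguishes antipodes only on $A=\emptyset$, and $\gamma(\emptyset)=0=2^0-1$.

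For the inductive step, let $g(x)=\lambda_1(x)-\lambda_1(-x)$, which is odd and continuous on $A$. Put $A_0=\{x\in A: g(x)=0\}$.

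\textbf{Fibres of $A_0$.} Fix $t\in\R$ and set $A_0^t=\{x\in A_0:\lambda_1(x)=t\}$. This set is closed and symmetric, because $\lambda_1(-x)=\lambda_1(x)=t$ on it. On $A_0^t$ the first measurement of $x$ and of $-x$ is the same value $t$. Hence the remaining measurements are governed by the fixed length-$(n-1)$ adaptive continuous measurement map $N_t$ built from the functionals $\lambda_k^{(t,\cdot)}$. Because $N$ distinguishes antipodes, $N_t$ must distinguish antipodes on $A_0^t$. The induction hypothesis then gives $\gamma(A_0^t)\le 2^{n-1}-1$.

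\textbf{Thickening the fibres.} By continuity of the genus and compactness, the closed symmetric sets
\[
V_{t,\delta}=\{x\in A:\ |\lambda_1(x)-t|\le\delta,\ |g(x)|\le\delta\}
\]
decrease to $A_0^t$ as $\delta\downarrow 0$. So for some $\delta_t>0$ they lie in a neighbourhood of the same genus, and $\gamma(V_{t,\delta_t})\le 2^{n-1}-1$.

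\textbf{Covering the range.} Since $\lambda_1(A)$ is bounded, the open intervals $(t-\delta_t,t+\delta_t)$ cover it. Choose a finite chain subcover $I_1,\dots,I_r$ in increasing order in which only consecutive intervals meet. Shrinking them to closed intervals $J_i$, still covering and with non-consecutive ones disjoint, the sets $\{x: \lambda_1(x)\in J_i,\ |g(x)|\le\varepsilon\}$ with $i$ odd are pairwise disjoint, and likewise for $i$ even. Here $\varepsilon\le\min_t\delta_t$ over the finitely many chosen $t$. By the disjoint-union property, each of the two families has union of genus at most $2^{n-1}-1$. Therefore
\[
\gamma(\{x\in A:|g(x)|\le\varepsilon\})\le 2(2^{n-1}-1).
\]

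\textbf{The remaining part.} The set $B=\{x\in A: |g(x)|\ge\varepsilon\}$ is closed and symmetric, and $g$ is an odd map $B\to\R\setminus\{0\}$. Hence $\gamma(B)\le 1$. Subadditivity yields
\[
\gamma(A)\le 1+2^n-2=2^n-1,
\]
which completes the induction.

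The main obstacle is the middle step. The later functionals depend arbitrarily on $y_1$, so there is no single continuous map on $A_0$, and the genus bound must be transported from individual fibres to all of $A_0$. The factor $2$, which is what produces $2^n$ rather than $n$, comes from covering the fibres by two families of mutually disjoint slabs. The care needed is in making sure the neighbourhoods are taken inside $A$, not just inside $A_0$, so that they fit together with the closed set $B$.
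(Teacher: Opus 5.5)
Your strategy is the paper's: genus, induction on $n$, one split set from the odd part $g$, the induction hypothesis on the fibres of $\{g=0\}$, and two families of pairwise disjoint slabs producing the factor $2$. As written, however, one step fails: the sets
\[
V_{t,\delta}=\{x\in A:\ |\lambda_1(x)-t|\le\delta,\ |g(x)|\le\delta\}
\quad\text{and}\quad
\{x\in A:\ \lambda_1(x)\in J_i,\ |g(x)|\le\varepsilon\}
\]
are not symmetric. Off $A_0$ you have $\lambda_1(-x)=\lambda_1(x)-g(x)$. So for $x\in V_{t,\delta}$ you only get $|\lambda_1(-x)-t|\le2\delta$, and the antipode of a point in the $i$-th slab may lie in the $(i\pm1)$-st slab instead. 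The genus, and with it the monotonicity and disjoint-union properties you invoke, is only defined for symmetric sets. If you instead symmetrize each slab, the odd-indexed ones need no longer be pairwise disjoint. That would require $\varepsilon$ to be smaller than the gaps between non-consecutive $J_i$, which your choice $\varepsilon\le\min_t\delta_t$ does not ensure.

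The repair is one line: in both definitions use the even part $\bar\lambda_1(x)=\tfrac12(\lambda_1(x)+\lambda_1(-x))$ instead of $\lambda_1$. It agrees with $\lambda_1$ on $A_0$, so $\bigcap_{\delta>0}V_{t,\delta}=A_0^t$ still holds. All the sets become closed and symmetric, non-consecutive slabs stay disjoint, and the rest of your argument goes through.

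With this fix your route is a slight variant of the paper's. The paper works inside $Z=A_0$, where $\mu=\lambda_1|_Z$ is already even, which is why it never meets this issue. It bounds $\gamma(Z)\le2h$ with slabs of $\mu$, using a Lebesgue-number partition rather than a chain subcover. It then applies (G4) a second time to thicken $Z$ inside $K$ and split off $W=\{x:\dist(x,Z)\ge r\}$. Your two-parameter boxes thicken each fibre directly in $A$, so the complement is simply $\{|g|\ge\varepsilon\}$ and the second thickening is unnecessary.
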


Adaptivity thus moves the Borsuk-Ulam threshold from~$m-1$ to~$\log_2 m$, and no
further: for $n\ge\lceil\log_2(m)\rceil+1$, the algorithm of \cite[Thm.~1]{KNU}
with precision $\varepsilon<R$ distinguishes antipodes on~$S_R^m$.

\medskip
By indistinguishability, Theorem~\ref{thm:main} says that, if~$2^n\le m$, every
algorithm using~$n$ adaptive continuous measurements on a set $F\supseteq S_R^m$ has
worst-case error~$e(A)\ge R$. If~$F$ is the ball of radius~$R$, this is the error of
the trivial algorithm~$A\equiv0$: below the threshold, the measurements are useless,
while $\lceil\log_2(m)\rceil+1$ of them give arbitrary precision \cite{KNU}.

\begin{corollary}\label{cor:learn}
Let $0<\varepsilon<R$. The minimal number $n^*(m,\varepsilon)$ of adaptive
continuous measurements of an algorithm with worst-case error $\varepsilon$ on a
ball of radius~$R$ in~$\R^m$ satisfies
\[
\lceil\log_2(m+1)\rceil\;\le\; n^*(m,\varepsilon)\;\le\;\lceil\log_2(m)\rceil+1 .
\]
\end{corollary}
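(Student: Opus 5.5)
The plan is to derive both inequalities directly: the lower bound from Theorem~\ref{thm:main} combined with the indistinguishability principle, and the upper bound from the algorithm of \cite{KNU}. There is no real obstacle. The only points needing care are that the ball may be centred anywhere, and that the supremum defining $e(A)$ need not be attained.

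\emph{Lower bound.} Let $B=\{x:\|x-c\|\le R\}$ be the ball. Let $A=\Phi\circ N$ be an algorithm using $n$ adaptive continuous measurements on $B$ with $e(A)\le\varepsilon$. I first reduce to a ball centred at the origin. Define $\tilde N(x)=N(c+x)$ for $x$ in the centred ball. This is again an adaptive continuous measurement map of length $n$: its functionals are $\tilde\lambda_1=\lambda_1(c+\cdot)$ and $\tilde\lambda_k^{(y_1,\dots,y_{k-1})}=\lambda_k^{(y_1,\dots,y_{k-1})}(c+\cdot)$, and composing with a translation preserves continuity, while the adaptive assignment is unchanged.

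Suppose $2^n\le m$. Theorem~\ref{thm:main} then gives $x\in S_R^m$ with $\tilde N(x)=\tilde N(-x)$, that is, $N(c+x)=N(c-x)$. Both points lie in $B$, so the algorithm outputs the same point $z$ for both. The triangle inequality gives
\[
2R=\|(c+x)-(c-x)\|\le\|c+x-z\|+\|c-x-z\|\le 2\,e(A).
\]
Hence $e(A)\ge R>\varepsilon$, a contradiction. Therefore every algorithm with error $\varepsilon$ satisfies $2^n\ge m+1$. Since $n$ is an integer, this means $n\ge\lceil\log_2(m+1)\rceil$.

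\emph{Upper bound.} By \cite[Thm.~1]{KNU}, for every precision there is an algorithm using $\lceil\log_2(m)\rceil+1$ adaptive continuous measurements that learns every $x\in\R^m$ within that precision. Its restriction to $B$ is again such an algorithm: the functionals restricted to $B$ stay continuous, and the worst-case error can only decrease. Choosing precision $\varepsilon$ therefore gives an algorithm with $e(A)\le\varepsilon$ on $B$, so $n^*(m,\varepsilon)\le\lceil\log_2(m)\rceil+1$.
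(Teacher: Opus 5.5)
Your proposal is correct and follows the paper's argument. The lower bound is Theorem~\ref{thm:main} combined with the indistinguishability estimate $2\,e(A)\ge\|x-x'\|$, the upper bound is the algorithm of \cite{KNU} with precision $\varepsilon$, and your translation to a ball centred at the origin is a harmless detail that the paper leaves implicit, since it takes the ball centred at $0$.
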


The bounds coincide if~$m$ is a power of two, and differ by~$1$ otherwise; see
Remark~\ref{rem:gap}.

\medskip
The proof of Theorem~\ref{thm:main} measures the difficulty of separating antipodal
pairs by the genus of Krasnosel'skii, the minimal number of sign-decidable regions covering the set, and shows that one measurement at most
doubles-plus-one resolves it. In words:
\begin{itemize}
\item The odd part $\delta(x)=\lambda(x)-\lambda(-x)$ of a measurement~$\lambda$
separates~$x$ from~$-x$ wherever $\delta\neq0$; this region has genus at most~$1$.
\item Where~$\delta=0$, the measurement is even and only sorts the points into level
sets, on which the algorithm continues with one measurement less. Level sets of the
same parity can share their genus, so this part costs at most a factor of two.
\item Adaptivity is invisible on a level set: antipodal points there have produced
identical values so far and face the same next functional.
\item Hence~$n$ measurements that distinguish antipodes on a set~$K$ force
$\gamma(K)\le2^n-1$, while the sphere has genus~$m$.
\end{itemize}
Sections~\ref{sec:genus} and~\ref{sec:proof} contain the proof,
Section~\ref{sec:banach} the extension to Banach spaces, and
Section~\ref{sec:history} documents how the proof was found in
dialogue with an AI assistant.

\section{Split sets and the genus}\label{sec:genus}

Throughout,~$K$ denotes a compact \emph{symmetric} ($K=-K$) subset of
$\R^m\setminus\{0\}$, 
equipped with the given norm (for distances and continuity). 
A map~$\varphi$ on~$K$ is \emph{odd} if $\varphi(-x)=-\varphi(x)$ and \emph{even} if
$\varphi(-x)=\varphi(x)$. A closed symmetric set $A\subseteq K$ is \emph{split} if
\[
A=A^{+}\cup A^{-},\qquad A^{\pm}\ \text{closed},\qquad A^{+}\cap A^{-}=\varnothing,
\qquad -A^{+}=A^{-} ,
\]
i.e., if a sign distinguishing~$x$ from~$-x$ can be assigned consistently on~$A$.
The \emph{genus} of~$K$ is the minimal number of split sets needed to cover~$K$,
\[
\gamma(K)\;:=\;\min\bigl\{k\ge0:\ K=A_1\cup\dots\cup A_k\ \text{with split sets}\
A_1,\dots,A_k\bigr\},
\]
so that $\gamma(K)=0$ iff $K=\varnothing$. This is Krasnosel'skii's genus
\cite{Kras,Rab,Str}, usually defined via odd maps:

\begin{lemma}\label{lem:genus}
$\gamma(K)$ is the least~$k\ge0$ such that there is a continuous odd map
$K\to\R^k\setminus\{0\}$.
\end{lemma}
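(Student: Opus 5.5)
We prove the two inequalities separately: a covering by $k$ split sets yields a continuous odd map into $\R^k\setminus\{0\}$, and conversely such a map yields $k$ split sets covering $K$. The case $k=0$ is trivial in both directions, since $\R^0\setminus\{0\}=\varnothing$ admits a map from $K$ iff $K=\varnothing$. Throughout we use that $K$ is a compact subset of $\R^m$, hence a metric space (with the given norm), so distance functions to closed sets are continuous.

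\emph{From split sets to an odd map.} Suppose $K=A_1\cup\dots\cup A_k$ with split sets $A_i=A_i^+\cup A_i^-$. For each $i$, define
\[
\varphi_i(x)=\dist(x,A_i^-)-\dist(x,A_i^+),\qquad x\in K .
\]
This function is continuous. It is odd because $-A_i^+=A_i^-$ and the norm is symmetric, so $\dist(-x,A_i^\pm)=\dist(x,A_i^\mp)$. On $A_i^+$ we have $\varphi_i=\dist(x,A_i^-)>0$, because $A_i^\pm$ are disjoint and closed and $A_i^-$ is compact. Likewise $\varphi_i<0$ on $A_i^-$. Hence $\varphi=(\varphi_1,\dots,\varphi_k)$ is a continuous odd map. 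It never vanishes: every $x\in K$ lies in some $A_i$, and then $\varphi_i(x)\neq0$. (If some $A_i^+$ is empty, then $A_i$ is empty too, and we may simply drop that set.)

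\emph{From an odd map to split sets.} Suppose $\varphi\colon K\to\R^k\setminus\{0\}$ is continuous and odd. By compactness, $c:=\min_K\max_i|\varphi_i|>0$. Set
\[
A_i^+=\{x\in K:\varphi_i(x)\ge c/2\},\qquad A_i^-=\{x\in K:\varphi_i(x)\le -c/2\},\qquad A_i=A_i^+\cup A_i^- .
\]
These sets are closed and disjoint, and oddness of $\varphi_i$ gives $-A_i^+=A_i^-$. So each $A_i$ is a closed symmetric split set. They cover $K$: for each $x$ some coordinate satisfies $|\varphi_i(x)|\ge c$. Thus $\gamma(K)\le k$.

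Combining the two directions, the minimal $k$ in each description is the same. The only point needing care is the use of a positive threshold $c/2$ in the second direction. If we used strict sign sets $\{\varphi_i>0\}$ instead, they would not be closed. This is where compactness of $K$, through the positive minimum $c$, is essential.
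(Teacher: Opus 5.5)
Your proof is correct and follows the paper's. The forward direction uses exactly the same odd distance functions $\dist(x,A_i^-)-\dist(x,A_i^+)$, and the converse likewise builds the split sets from the sign regions of the coordinates of the odd map. The one difference is that the paper takes $A_i=\{x\in K:|\psi_i(x)|=\|\psi(x)\|_\infty\}$, on which $\psi_i$ has no zero, so $A_i\cap\{\psi_i\ge0\}$ and $A_i\cap\{\psi_i\le0\}$ are already closed and disjoint without any threshold. Your $c/2$ argument is valid, but your closing claim that compactness is essential at that step is overstated.
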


\begin{proof}
Let $K=A_1\cup\dots\cup A_k$ with split sets~$A_i$. The distance functions
\[
\sigma_{A_i}(x)\;=\;\dist(x,A_i^{-})-\dist(x,A_i^{+}),\qquad x\in K,
\]
are continuous, odd (as $\dist(-x,A_i^{-})=\dist(x,A_i^{+})$), and have no zero on~$A_i$; so $(\sigma_{A_1},\dots,\sigma_{A_k})$ is a continuous odd map
$K\to\R^k\setminus\{0\}$. Conversely, let $\psi\colon K\to\R^k\setminus\{0\}$ be
continuous and odd. The sets $A_i=\{x\in K:|\psi_i(x)|=\|\psi(x)\|_\infty\}$ are
closed, symmetric and cover~$K$, and~$\psi_i$ has no zero on~$A_i$, so~$A_i$ is
split into $A_i\cap\{\psi_i>0\}$ and $A_i\cap\{\psi_i<0\}$.\\
\end{proof}

\medskip
Let $A,B\subseteq K$ be closed and symmetric, and let
\[
U_t(A)\;=\;\{x\in K:\ \dist(x,A)\le t\},\qquad t>0,
\]
be the closed $t$-neighborhood of~$A$ in~$K$. We need the following properties.
\begin{enumerate}
\item[(G1)] $\gamma(A)\le\gamma(B)$ if $A\subseteq B$, or if there is a continuous
odd map $f\colon A\to B$: intersect, or pull back by~$f$, a covering of~$B$.
\item[(G2)] $\gamma(A\cup B)\le\gamma(A)+\gamma(B)$: unite the coverings.
\item[(G3)] $\gamma(A\cup B)=\max\{\gamma(A),\gamma(B)\}$ if $A\cap B=\varnothing$:
if~$A_i,B_i$ are split, so is $A_i\cup B_i$, with halves $A_i^\pm\cup B_i^\pm$.
\item[(G4)] There is~$t>0$ with $\gamma(U_t(A))=\gamma(A)$: if $A=A_1\cup\dots\cup A_k$
with split sets~$A_i$ and $0<t<\tfrac12\min_i\dist(A_i^{+},A_i^{-})$, then the sets
$\{\dist(\cdot,A_i^{+})\le t\}\cup\{\dist(\cdot,A_i^{-})\le t\}$ are split and cover~$U_t(A)$.
\item[(G5)] $\gamma(S_R^m)=m$: by Lemma~\ref{lem:genus}, this is the Borsuk-Ulam
theorem \cite{Bor,Mat}, every odd continuous map~$S_R^m\to\R^k$ with~$k<m$ has a
zero; it is usually stated for the Euclidean sphere, which is mapped onto~$S_R^m$
by the odd homeomorphism $x\mapsto Rx/\|x\|$, see (G1).
\end{enumerate}

\section{Proof of Theorem~\ref{thm:main}}\label{sec:proof}

We will show that the existence of an adaptive continuous measurement map that can distinguish antipodes implies a \emph{small} genus. 

\begin{proposition}\label{prop:main}
Let $K\subseteq\R^m\setminus\{0\}$ be compact and symmetric, and let an adaptive
continuous measurement map~$N$ of length~$n$ on~$K$ distinguish antipodes, i.e.\
$N(x)\neq N(-x)$ for all~$x\in K$. Then $\gamma(K)\le2^n-1$.
\end{proposition}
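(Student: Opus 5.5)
We prove the statement by induction on~$n$. The claim we carry through the induction is exactly Proposition~\ref{prop:main}, for all compact symmetric $K\subseteq\R^m\setminus\{0\}$ simultaneously. For $n=0$ the map $N$ is constant, so distinguishing antipodes forces $K=\varnothing$, hence $\gamma(K)=0=2^0-1$. For the step, let $\lambda=\lambda_1\in C(K)$ be the first measurement and $\delta(x)=\lambda(x)-\lambda(-x)$ its odd part. Put $Z=\{x\in K:\delta(x)=0\}$, which is compact and symmetric. By (G4) we choose $t>0$ with $\gamma(U_t(Z))=\gamma(Z)$. The complement $K\setminus \operatorname{int} U_t(Z)$ is contained in $\{|\delta|\ge c\}$ for some $c>0$ by compactness. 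On that set $\delta$ is a continuous odd map without zeros, so it is split by $\{\delta\ge c\}$ and $\{\delta\le -c\}$ and has genus at most~$1$. By (G2) it then suffices to show $\gamma(Z)\le 2(2^{n-1}-1)=2^n-2$, since $1+2^n-2=2^n-1$.

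On $Z$ the function $\lambda$ is even. For each value $y\in\R$ the level set $Z_y=\{x\in Z:\lambda(x)=y\}$ is compact and symmetric. On $Z_y$ the remaining measurements $\lambda_k^{(y,\cdot)}$ form an adaptive continuous measurement map $N_y$ of length $n-1$. It distinguishes antipodes on $Z_y$, because $N(x)=(y,N_y(x))$ and $N(-x)=(y,N_y(-x))$. This is the point where adaptivity becomes invisible: both $x$ and $-x$ face the same later functionals. By induction, $\gamma(Z_y)\le 2^{n-1}-1$ for every $y$.

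The main obstacle is to glue the uncountably many level sets. The difficulty is that distinct levels are disjoint but not uniformly separated, so (G3) does not apply directly. We handle this by compactness. For each $y$, the property (G4) gives $t_y>0$ with $\gamma(U_{t_y}(Z_y))=\gamma(Z_y)$. By continuity of $\lambda$ and compactness of $Z$, there is $\eta_y>0$ such that $\{x\in Z: |\lambda(x)-y|\le\eta_y\}\subseteq U_{t_y}(Z_y)$. Otherwise we would have points with $\lambda\to y$ staying at distance $\ge t_y$ from $Z_y$, and a limit point would lie in $Z_y$, a contradiction. The open intervals $(y-\eta_y,y+\eta_y)$ cover the compact set $\lambda(Z)$. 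We pick a finite subcover and refine it to finitely many closed intervals $I_1<I_2<\dots<I_r$ covering $\lambda(Z)$, each contained in some $[y-\eta_y,y+\eta_y]$, and with $I_j\cap I_l=\varnothing$ whenever $|j-l|\ge2$. This is possible by a standard Lebesgue-number subdivision of $[\min\lambda,\max\lambda]$ into short consecutive intervals that overlap only with their neighbours.

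Set $W_j=Z\cap\lambda^{-1}(I_j)$. Each $W_j$ is closed and symmetric, because $\lambda$ is even on $Z$. By (G1), $\gamma(W_j)\le 2^{n-1}-1$. Now let $E=\bigcup_{j \text{ even}}W_j$ and $O=\bigcup_{j\text{ odd}}W_j$. Within each of these unions the pieces are pairwise disjoint, since the preimages of disjoint closed intervals are disjoint compact sets. By (G3), applied finitely many times, $\gamma(E),\gamma(O)\le 2^{n-1}-1$. Since $Z=E\cup O$, (G2) gives $\gamma(Z)\le 2^n-2$, which closes the induction. This is the announced \emph{same-parity level sets share their genus} step. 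The only delicate points to verify carefully are the uniform neighborhood choice $\eta_y$ and the construction of the alternating cover.
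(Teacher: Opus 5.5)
Your proof is correct and follows the paper's argument essentially step by step. You split off the region where the odd part $\delta$ of $\lambda_1$ has no zero (genus at most $1$), apply the induction hypothesis on the level sets of the even function $\lambda_1|_Z$, thicken them via (G4), and glue the slabs of a Lebesgue-number partition into two families of pairwise disjoint sets via (G3) and (G2); the only differences are cosmetic (you start the induction at $n=0$, which works since a length-$0$ map is constant and forces $K=\varnothing$, and you use $K\setminus\operatorname{int}U_t(Z)$ where the paper uses $\{x\in K:\dist(x,Z)\ge r\}$).
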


\begin{proof}
Induction on~$n$. For~$n=1$, let $\lambda_1$ be the (first and only) measurement. The function $\delta(x)=\lambda_1(x)-\lambda_1(-x)$ is
odd, continuous and has no zero on~$K$, since $\lambda_1$ distinguishes all antipodes. So,~$K$ is split into $K\cap\{\delta\ge0\}$ and
$K\cap\{\delta\le0\}$: both sets are closed, they are disjoint since~$\delta$ has
no zero, and they are exchanged by $x\mapsto-x$ since~$\delta$ is odd. Hence
$\gamma(K)\le1$.

Let~$n\ge2$ and $h=2^{n-1}-1$. Again $\delta(x)=\lambda_1(x)-\lambda_1(-x)$ is odd
and continuous, and
\[
Z\;=\;\delta^{-1}(0)\;=\;\{x\in K:\lambda_1(x)=\lambda_1(-x)\}
\]
is compact and symmetric. If $Z=\varnothing$, then $\gamma(K)\le1$ as before. So let
$Z\neq\varnothing$. The function $\mu=\lambda_1|_Z$ is \emph{even}, so all level sets
$F_y=\mu^{-1}(y)$ and slabs $\mu^{-1}(I)$, with an interval $I\subseteq\R$, are symmetric.

\emph{Level sets.} Let~$y\in\mu(Z)$. All~$x\in F_y$ have the same first value~$y_1=y$, so the second functional is the same for all points of~$F_y$, and the
remaining measurements form an adaptive continuous measurement map~$N_y$ of length~$n-1$ on~$F_y$ with $N(x)=(y,N_y(x))$. Antipodal pairs in~$F_y$ share the first
value, so~$N_y$ distinguishes antipodes on~$F_y$, and the induction hypothesis gives
$\gamma(F_y)\le h$. By (G4) and (G1) there is~$\delta_y>0$ with $\gamma(U_{\delta_y}(F_y)\cap Z)\le h$. Moreover there is~$\rho_y>0$ with
\[
\mu^{-1}\bigl([\,y-\rho_y,\;y+\rho_y\,]\bigr)\;\subseteq\;U_{\delta_y}(F_y):
\]
otherwise there are $x_j\in\mu^{-1}([y-\tfrac1j,y+\tfrac1j])$ with
$\dist(x_j,F_y)>\delta_y$; since~$Z$ is compact, a subsequence converges to
some~$x^*\in Z$, which satisfies $\mu(x^*)=y$, i.e., $x^*\in F_y$, and
$\dist(x^*,F_y)\ge\delta_y$, a contradiction. By (G1), every slab
$\mu^{-1}(I)$ with $I\subseteq(y-\rho_y,y+\rho_y)$ has genus at most~$h$. For
$y\notin\mu(Z)$ we choose~$\rho_y>0$ such that the slab is empty.

\emph{Slabs.} Let $[c,d]=[\min_Z\mu,\max_Z\mu]$. If~$c=d$, then~$Z=F_c$ and
$\gamma(Z)\le h$. Otherwise, the open intervals $(y-\rho_y,y+\rho_y)$,~$y\in[c,d]$,
cover~$[c,d]$. By compactness of~$[c,d]$, there is a Lebesgue number of this
cover, i.e., some~$\eta>0$ such that every subinterval of~$[c,d]$ of length less
than~$\eta$ is contained in one of the intervals $(y-\rho_y,y+\rho_y)$. Choose a
partition $c=t_0<t_1<\dots<t_M=d$ with widths less than~$\eta$. Then each
$[t_{i-1},t_i]$ lies in some $(y-\rho_y,y+\rho_y)$, so the slab
$S_i=\mu^{-1}([t_{i-1},t_i])$ has genus at most~$h$. Slabs of the same parity
are pairwise disjoint, so by (G3)
\[
\gamma\Bigl(\bigcup_{i\ \mathrm{odd}}S_i\Bigr)\le h,
\qquad
\gamma\Bigl(\bigcup_{i\ \mathrm{even}}S_i\Bigr)\le h,
\]
and (G2) gives $\gamma(Z)\le2h$.

\emph{The rest.} Since~$Z$ is compact and symmetric with $\gamma(Z)\le2h$, (G4)
gives some~$r>0$ with $\gamma(U_r(Z))\le2h$. Let
\[
W\;=\;\{x\in K:\ \dist(x,Z)\ge r\} .
\]
This set is closed and symmetric, the latter since~$Z$ is symmetric and hence
$\dist(-x,Z)=\dist(x,Z)$. As $W\cap Z=\varnothing$ and $Z$ is the zero set of~$\delta$,
the odd function~$\delta$ has no zero on~$W$, so~$W$ is split into
$W\cap\{\delta\ge0\}$ and $W\cap\{\delta\le0\}$ as in the case~$n=1$, and
$\gamma(W)\le1$. Every $x\in K$ has $\dist(x,Z)\le r$ or $\dist(x,Z)\ge r$, so
$K=U_r(Z)\cup W$, and (G2) gives $\gamma(K)\le2h+1=2^n-1$.
\end{proof}

\begin{proof}[Proof of Theorem~\ref{thm:main}]
Let~$N$ distinguish antipodes on~$S_R^m$. Restricting all functionals to~$K=S_R^m$ gives
a measurement map on~$K$ that distinguishes antipodes. By (G5) and
Proposition~\ref{prop:main}, $m=\gamma(S_R^m)\le2^n-1$.\\
\end{proof}

\medskip
\begin{remark}[An odd map built from the measurements]\label{rem:explicit}
If~$N$ distinguishes antipodes on~$S_R^m$, the proof covers~$S_R^m$ by $2^n-1$ split
sets built from the measurements: one from the sign regions $\{\delta\ge0\}$ and
$\{\delta\le0\}$ of~$\delta$ off a neighborhood of its zero set~$Z$, and
$2(2^{n-1}-1)$ inside~$Z$, since~$Z$ is covered by two families of disjoint slabs on
which only $n-1$ measurements remain, so that the count for~$n-1$ enters twice. The
distance functions of Lemma~\ref{lem:genus} turn this covering into an odd map
$S_R^m\to\R^{2^n-1}\setminus\{0\}$. In this sense, Theorem~\ref{thm:main} is the
Borsuk-Ulam theorem applied to an odd map built from the measurements.
\end{remark}

\begin{remark}[The gap for~$m$ not a power of two]\label{rem:gap}
For~$m$ not a power of two, Theorem~\ref{thm:main} and \cite{KNU} leave a gap of one
measurement. The method of this paper cannot close it: every compact symmetric
$K\subseteq\R^m\setminus\{0\}$ has $\gamma(K)\le m$, since the identity is an odd map
without zeros, so bounding the genus never gives more than~$2^n\ge m+1$. M.~Dolbeault and D.~Krieg
suggested to look for indistinguishable pairs on tori instead of spheres, and
Claude~Fable~5 turned this idea into a proof that, for every~$m$,
$\lceil\log_2 m\rceil+1$ adaptive continuous measurements are necessary to learn
the vectors of a ball in~$\R^m$ with sufficiently small error, so that the upper
bound of \cite{KNU} is sharp. That proof is rather involved, and a
transparent proof remains to be found. Whether antipodal coincidences on the sphere
are forced for $n=\lceil\log_2 m\rceil$ is open.
\end{remark}

\section{Subsets of Banach spaces and linear operators}\label{sec:banach}

Let~$X$ and~$Y$ be Banach spaces, $S\colon X\to Y$ a bounded linear operator, and
$F\subseteq X$ a bounded set. We want to approximate~$S(f)\in Y$ for~$f\in F$ from adaptive
continuous measurements defined on~$X$: the functionals $\lambda_k^{(\cdot)}$ are in~$C(X)$, the functions on~$X$ that are continuous in the norm of~$X$, and an algorithm
is $A=\Phi\circ N$ with arbitrary $\Phi\colon\R^n\to Y$. The minimal worst-case error
of algorithms using~$n$ such measurements is
\[
e_n^{\rm cont}(S,F)\;=\;\inf_{A}\ \sup_{f\in F}\,\bigl\|S(f)-A(f)\bigr\|_Y .
\]
Following \cite{KNU24}, the Bernstein numbers of~$S$ on~$F$ are
\[
b_k(S,F)\;=\;\sup_{\substack{V\subseteq X\ \text{affine}\\ \dim V=k+1}}\
\sup_{g\in F\cap V}\ \inf_{f\in V\setminus F}\,\|S(f)-S(g)\|_Y ,
\]
the largest radius of a ball around a point of~$F$ in an affine
$(k+1)$-dimensional subspace that is contained in~$F$, where distances are measured
through~$S$. For~$F=B_X$, the unit ball of~$X$, these are the Bernstein numbers~$b_k(S)$
of~$S$; for~$S$ the identity, these are the Bernstein widths~$b_k(F,X)$ of~$F$, see
\cite{Pinkus}. For unbounded~$F$, one may use the general definition of
\cite{KNU24}; the proof below is the same.

\begin{theorem}\label{thm:banach}
For all~$n\in\N$,
\[
e_n^{\mathrm{cont}}(S,F)\;\ge\;b_{2^n-1}(S,F) .
\]
\end{theorem}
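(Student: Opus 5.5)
We plan to reduce Theorem~\ref{thm:banach} to Proposition~\ref{prop:main} applied to a sphere in a finite-dimensional affine slice of~$F$. Write $k=2^n-1$ and fix an algorithm $A=\Phi\circ N$ with $n$ adaptive continuous measurements on~$X$. It suffices to show $\sup_{f\in F}\|S(f)-A(f)\|_Y\ge r$ for every~$r$ smaller than $b_k(S,F)$. By the definition of the Bernstein numbers, we then find an affine subspace $V=g+V_0$ with $\dim V_0=k+1$ and a point $g\in F\cap V$ such that every $f\in V\setminus F$ satisfies $\|S(f)-S(g)\|_Y>r$.

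Equip $V_0$ with the seminorm $p(v)=\|S(v)\|_Y$. If $S$ is injective on~$V_0$, this is a norm, and we consider the sphere $\Sigma=\{v\in V_0:\ p(v)=r'\}$ for some $r'<r$ close to~$r$. Every $v\in\Sigma$ has $g+v\in F$: otherwise $g+v\in V\setminus F$ with $\|S(g+v)-S(g)\|_Y=r'<r$, which is impossible. If $S$ is not injective on~$V_0$, we pick $v\neq0$ in its kernel inside~$V_0$. Then all $g+tv$ with $t\in\R$ lie in~$F$ by the same argument, contradicting boundedness of~$F$. (For unbounded~$F$ with the general definition of \cite{KNU24}, this case needs separate handling, and the paper says the proof is the same.) So we may assume $p$ is a norm, and we identify $V_0\cong\R^{k+1}$ with this norm.

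Define measurements on $\Sigma$ by $\tilde\lambda(v)=\lambda(g+v)$. These are continuous, since $v\mapsto g+v$ is continuous from $(V_0,p)$ into~$X$ (all norms on the finite-dimensional $V_0$ are equivalent). The adaptive structure is kept verbatim, so $\tilde N(v)=N(g+v)$ is an adaptive continuous measurement map of length~$n$ on $\Sigma=S^{k+1}_{r'}$. Since $2^n=k+1$, Theorem~\ref{thm:main} with $m=k+1$ gives $v\in\Sigma$ with $N(g+v)=N(g-v)$. Hence $A(g+v)=A(g-v)$, and both $g\pm v$ lie in~$F$. By the triangle inequality,
\[
2\sup_{f\in F}\|S(f)-A(f)\|_Y\ \ge\ \|S(g+v)-S(g-v)\|_Y\ =\ 2p(v)\ =\ 2r'.
\]
Letting $r'\uparrow r\uparrow b_k(S,F)$ and taking the infimum over~$A$ yields the claim.

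The main obstacle is the bookkeeping in the second step. We must check that the whole sphere $\Sigma$ of the transported norm lies in $F-g$, which is why we take $r'<r$ strictly, and we must treat a possibly non-injective $S$ on~$V_0$. The rest is a direct transfer of Theorem~\ref{thm:main}, which holds for an arbitrary norm exactly so that the norm $p$ induced by~$S$ can be used.
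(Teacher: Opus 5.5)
Your proposal is correct and follows the paper's proof. You pass to the sphere of the $S$-induced norm in a $2^n$-dimensional affine slice inside $F$, with injectivity of $S$ on $V_0$ forced by the boundedness of $F$; you transport the measurements to $v\mapsto N(g+v)$; and you get an antipodal coincidence from the adaptive Borsuk--Ulam theorem, where you invoke Theorem~\ref{thm:main} with $m=2^n$ directly and the paper cites Proposition~\ref{prop:main} with (G5), which is exactly how Theorem~\ref{thm:main} is proved. Your auxiliary radius $r'<r$ is harmless but unnecessary, since every point of $V\setminus F$ already has distance $>r$ from $g$ in the $S$-induced norm, so the sphere of radius $r$ itself lies in $F$.
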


\begin{proof}
We may assume $b_{2^n-1}(S,F)>0$. Let~$V\subseteq X$ be affine with~$\dim V=2^n$
and~$g\in F\cap V$ such that the infimum in the definition of~$b_{2^n-1}(S,F)$ is
positive, and let $0<r<\inf_{f\in V\setminus F}\|S(f)-S(g)\|_Y$. Then
\[
K\;=\;\bigl\{f\in V:\ \|S(f)-S(g)\|_Y=r\bigr\}\;\subseteq\;F .
\]
Since~$F$ is bounded and~$V$ is not, the infimum is positive only if~$S$ is injective
on the linear space~$E=V-g$; so $\|h\|_E:=\|S(h)\|_Y$ is a norm on~$E$, and
$K-g=\{h\in E:\|h\|_E=r\}$ is the sphere of radius~$r$ of a $2^n$-dimensional normed
space. The translated measurements $h\mapsto\lambda_k^{(\cdot)}(g+h)$ form an
adaptive continuous measurement map~$N_g$ on~$K-g$, continuous since all norms on~$E$
are equivalent, with $N_g(h)=N(g+h)$. Identifying~$(E,\|\cdot\|_E)$ with~$\R^{2^n}$
and a norm on it, (G5) gives $\gamma(K-g)=2^n>2^n-1$, so by
Proposition~\ref{prop:main} the map~$N_g$ does not distinguish antipodes: there is
$h\in K-g$ with $N(g+h)=N(g-h)$. Both $g+h$ and $g-h$ lie in~$K\subseteq F$, and the
algorithm returns the same element for them, so
\[
\sup_{f\in F}\,\|S(f)-A(f)\|_Y\;\ge\;\tfrac12\,\bigl\|S(g+h)-S(g-h)\bigr\|_Y
\;=\;\|S(h)\|_Y\;=\;r .
\]
Taking the supremum over~$V$, $g$ and~$r$ proves the claim.\\
\end{proof}

\medskip
For comparison, adaptive algorithms using~$n$ \emph{linear} measurements have
worst-case error at least~$b_n(S,F)$, see \cite[Prop.~10]{KNU24}, by the same
indistinguishability argument with linear algebra in place of the Borsuk-Ulam
theorem. Continuous measurements shift the index from~$n$ to~$2^n-1$; that an
exponential shift is unavoidable follows from the upper bound of \cite{KNU}, see
Corollary~\ref{cor:hilbert}.

\medskip
For~$F=B_X$, this complements the upper bound of \cite{KNU}. Let $e_k^{\rm non}(S)$
be the minimal error of algorithms using~$k$ nonadaptive continuous measurements on~$B_X$. Then $b_k(S)\le e_k^{\rm non}(S)\le\delta_k(S)$, where $\delta_k(S)$ are the
manifold widths, see \cite[Sec.~10]{KU}. Both~$b_k$ and~$\delta_k$ are (up to the
indexing)~$s$-numbers in the sense of Pietsch \cite{KU,Mathe}, and all~$s$-numbers coincide if~$X$ and~$Y$ are Hilbert spaces
\cite{Pietsch}. Hence $e_k^{\rm non}(S)=b_k(S)$ in this case, and
$e_n^{\rm cont}(S,B_X)\le e^{\rm non}_{2^{n-2}}(S)$ for~$n\ge2$ by
\cite[Sec.~4]{KNU}.

\begin{corollary}\label{cor:hilbert}
Let~$X,Y$ be Hilbert spaces and $S\colon X\to Y$ bounded and linear. Then, for~$n\ge2$,
\[
e^{\rm non}_{2^n-1}(S)\;\le\;e_n^{\rm cont}(S,B_X)\;\le\;e^{\rm non}_{2^{n-2}}(S).
\]
\end{corollary}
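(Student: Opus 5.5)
The plan is to combine the lower bound of Theorem~\ref{thm:banach} with the Hilbert space identity $e_k^{\rm non}(S)=b_k(S)$, and to take the upper bound directly from \cite[Sec.~4]{KNU}, as recalled just before the statement.

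\emph{Lower bound.} Apply Theorem~\ref{thm:banach} with $F=B_X$. This gives $e_n^{\rm cont}(S,B_X)\ge b_{2^n-1}(S,B_X)=b_{2^n-1}(S)$. The set $B_X$ is bounded, so the theorem applies as stated. Next I will use that for Hilbert spaces $X,Y$ all $s$-numbers coincide \cite{Pietsch}. Both $b_k$ and the manifold widths $\delta_k$ are $s$-numbers up to indexing \cite{KU,Mathe}, and the sandwich $b_k(S)\le e_k^{\rm non}(S)\le\delta_k(S)$ holds by \cite[Sec.~10]{KU}. Hence $e_k^{\rm non}(S)=b_k(S)$ for every $k$. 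With $k=2^n-1$ this yields $e^{\rm non}_{2^n-1}(S)=b_{2^n-1}(S)\le e_n^{\rm cont}(S,B_X)$.

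\emph{Upper bound.} For $n\ge2$, I will quote the construction of \cite[Sec.~4]{KNU}. It simulates $2^{n-2}$ nonadaptive continuous measurements by $n$ adaptive ones (about $n-2$ of them locate a dyadic index, and the others encode the values), which gives $e_n^{\rm cont}(S,B_X)\le e^{\rm non}_{2^{n-2}}(S)$. This holds for general Banach spaces, so no Hilbert structure is needed here.

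The main obstacle is the indexing in the equality $b_k=\delta_k$ in Hilbert spaces. The paper writes ``up to the indexing'', so I must check that $b_k$, with its $(k+1)$-dimensional subspaces, and $\delta_k$, with maps into $\R^k$, both correspond to the same $s$-number $s_{k+1}$; in Hilbert space this is the singular value $\sigma_{k+1}(S)$. If they are shifted relative to each other, the lower bound must be restated with the shifted index. I expect the check to give $b_k(S)=\sigma_{k+1}(S)=\delta_k(S)$, so that the displayed chain holds exactly as written.
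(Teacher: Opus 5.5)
Your proposal is correct and follows the paper's argument. The lower bound comes from Theorem~\ref{thm:banach} with $F=B_X$, combined with the Hilbert-space identity $e_k^{\rm non}(S)=b_k(S)$, which follows from $b_k(S)\le e_k^{\rm non}(S)\le\delta_k(S)$ and the coincidence of $s$-numbers. Your indexing check $b_k(S)=\sigma_{k+1}(S)=\delta_k(S)$ is right, and the upper bound is quoted from \cite[Sec.~4]{KNU}, as in the paper.

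Drop your aside that the upper bound holds in general Banach spaces. It is not needed here, and it is not clear: the algorithm of \cite{KNU} recovers the simulated nonadaptive values only up to arbitrary precision, while the reconstruction map in $e_k^{\rm non}$ may be discontinuous. This causes no trouble in the Hilbert case, where near-optimal nonadaptive algorithms can be taken linear.
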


\medskip
The exponential speed-up of adaption found in \cite{KNU} is thus optimal, up to a
factor~$8$ in the index.

\begin{remark}
Whether $e_n^{\rm cont}(S,B_X)\ge c\,e^{\rm non}_{2^n-1}(S)$ holds, with some
constant~$c>0$, in all Banach spaces is open. The method of this paper cannot decide
it: it only detects pairs $f,-f$ on spheres that are indistinguishable, and these
yield the Bernstein numbers. A
direct comparison would require an adaptive measurement map of length~$n$ to factor
through a continuous nonadaptive one of length~$2^n-1$, which fails in general, since
the fibers of~$N$ inherit the discontinuous choice of~$\lambda_2^{(y_1)}$.
\end{remark}

\newpage
\section{How this paper came about: working with an AI assistant}\label{sec:history}

The mathematics of this paper was found in July 2026 in a dialogue with Claude~Fable~5, an AI model developed by Anthropic, over the course of a few hours spread
over a handful of sessions. Since such collaborations are new to the field, and
since the path from question to proof was itself instructive, we document it here.
All prompts below are quoted verbatim; all proofs were subsequently checked by the
author.

\subsection{History of the problem}

Before \cite{KNU}, information from arbitrary continuous measurements had hardly
been considered: in information-based complexity, admissible information is
traditionally linear \cite{KNU24,No96}, and continuous information appeared mainly
implicitly, as the benchmark behind manifold widths \cite{DHM}. The question of what
adaptively chosen continuous measurements can do arose in discussions of David
Krieg, Erich Novak and the author, who worked on it for about a year, beginning in
Passau in the summer of 2024 and continuing at the Isaac Newton Institute in
Cambridge. The outcome was a surprise: $\lceil\log_2(m)\rceil+1$ such measurements
recover a vector of~$\R^m$ with arbitrary precision, exponentially fewer than the~$m$ linear measurements that are necessary and sufficient in the classical setting.
For the lower bound, only~$n\ge2$ was known, from the Borsuk-Ulam theorem applied to
the first measurement, and it was left open in \cite{KNU} whether more can be said.
It was not even clear to us whether the truth was near~$2$ or near~$\log_2 m$.

\subsection{The first prompt and the first proof}

The author put the open problem to the model with deliberately modest expectations,
suggesting a target strictly below the conjectured truth:

\begin{quote}\small
``prove a lower bound by proving that log(m)/2 is not enough, since there are
elements far away that cannot be distinguished''
\end{quote}

The model returned, within one response, a complete draft of a stronger statement:
antipodal pairs on a sphere survive any $n\le\log_2(m+1)-1$ adaptive continuous
measurements. The mechanism was already that of Section~\ref{sec:proof}, with the
cohomological~$\Z_2$-index \cite{Bartsch,FR,Yang} of the surviving symmetric set in
place of the genus: the odd part of a measurement removes one unit of index, and its
level sets cost a factor of two. The index machinery proves the theorem, but it is
heavy: the draft relied on relative cup products and the continuity of \v{C}ech
cohomology \cite{Bartsch}, which the author could not check himself.

\subsection{Toward a proof to understand}

The first draft closed with a remark suggesting an explicit map, and the author
asked
\begin{quote}\small
``does the statement of the final remark give an easier proof?''
\end{quote}
The answer was a reformulation in the language of Smale and Vassiliev \cite{Sma,Vas},
who measure the complexity of algorithms that branch on real-valued tests by the
topology of the problem. The relevant invariant is the Schwarz genus \cite{Sch} of the
map $x\mapsto\{x,-x\}$ from~$S_R^m$ onto the space of antipodal pairs
$S_R^m/\{x\sim-x\}$: the least number of open sets covering the quotient on each of which one point of every pair $\{x,-x\}$ can be
chosen continuously. In this language,
Theorem~\ref{thm:main} says that $n$ adaptive real-valued queries resolve a double
covering of Schwarz genus at most~$2^n-1$, as if each real-valued query were a
binary test. This was the first version with the final threshold~$2^n\le m$, and the
reason is the invariant: a genus is subadditive, $\gamma(A\cup B)\le\gamma(A)+\gamma(B)$,
whereas the index satisfies only $\operatorname{ind}(A\cup B)\le\operatorname{ind}(A)+\operatorname{ind}(B)+1$ and loses one unit per measurement. Still unfamiliar with these methods, the author
asked for a geometric proof,
\begin{quote}\small
``is there a way to prove it more geometrical. if yes, do as extra section''
\end{quote}
and the model replaced the index by the covering version of Krasnosel'skii's genus of
Section~\ref{sec:genus}, which is the Schwarz genus of the antipodal covering with
closed sets in place of open ones, no difference on compact spaces. Its properties
(G1) to (G4) were proved by covering arguments, so that the Borsuk-Ulam theorem
remained the only topological input. Next came the
original language of Krasnosel'skii,
\begin{quote}\small
``add a proof using the original definition of genus with continuous odd functions.''
\end{quote}
and the proof became purely functional-analytic: odd Tietze extensions and bump
functions in the measured value. This version ended with the remark that the odd
parts of the measurements assemble into an explicit odd map
$S_R^m\to\R^{2^n-1}\setminus\{0\}$. The last request,
\begin{quote}\small
``is there a more direct construction without partition of unity?''
\end{quote}
replaced Tietze extensions and bump functions by split sets and distance functions.
This gave the odd map built from the measurements of Remark~\ref{rem:explicit}, with
the distance functions of Lemma~\ref{lem:genus} as the final ingredient.

\subsection{One theorem, several dialects}

Within a few sessions, the same theorem had thus been proved in four languages: with
the cohomological $\Z_2$-index of algebraic topology; with the Schwarz genus, in
the language of topological complexity; with the genus of critical point theory, in
its covering form; and with odd functions, Tietze extensions and partitions of unity,
in the language of analysis.
Not every version taught us something new, but it was a pleasure to see the same
mechanism in these different forms, so that we could choose the dialect in which the
argument was easiest to check and to communicate. For the presentation we chose the
genus, which has citable properties and an elementary covering proof. We enjoyed
working with the model as a translator between mathematical languages and ideas, and
we find it remarkable that it settled an open problem of our field with easy methods
from another.

The division of labor was consistent throughout: the author chose the questions,
rejected or accepted reformulations, and checked the proofs; the model produced the
mechanisms and the drafts. The drafts were not papers: some arguments were short,
definitions were missing, and notation changed between versions. Turning the
mathematics, found within hours, into the present text took considerably longer,
over many rounds of reading, questioning and rewriting, and in the end a good deal
of editing by hand.

\end{document}